\date{\today}
\title 
[A note on residue currents of integrally
closed ideals]
{A note on the singularities of residue currents of integrally closed
  ideals}
\author{Elizabeth Wulcan}
\thanks{The author was partially supported by the Swedish Research Council.} 
\subjclass[2010]{13B22, 13D02, 32A27, 32S45}
\keywords{Residue currents, integrally closed ideals, monomial ideals.} 
\address{Department of Mathematical Sciences\\Chalmers University of
  Technology and University of Gothenburg\\SE-412 96 
Gothenburg\\SWEDEN}
\email{wulcan@chalmers.se}
\newtheorem{thm}{Theorem}[section]
\theoremstyle{definition}
\theoremstyle{remark}
\newtheorem{preremark}[thm]{Remark}
\newtheorem{preex}[thm]{Example}
\newenvironment{ex}{\begin{preex}}{\qed\end{preex}}
\def\K{{K}}
\def\a{{\mathfrak a}}
\DeclareMathOperator{\ord}{ord}
\DeclareMathOperator{\sign}{sgn}
\DeclareMathOperator{\np}{NP}
\def\1{\mathbf 1}
\def\m{{\mathfrak m}}
\def\Np{{\mathbf N}}
\def\F{{D}}
\newcommand{\cone}{\mathcal C}
\newcommand{\Image}{{\text{Im}\,}}
\newcommand{\C}{\mathbf{C}}
\newcommand{\Z}{\mathbf{Z}}
\newcommand{\dbar}{\bar{\partial}}
\newcommand{\R}{\mathbf{R}}
\newcommand{\Q}{\mathbf{Q}}
\newcommand{\Ok}{\mathcal{O}}
\newcommand{\codim}{{\text{codim}\,}}
\newcommand{\Hom}{{\text{Hom}\,}}
\newcommand{\End}{{\text{End}\,}}
\def\newop#1{\expandafter\def\csname #1\endcsname{\mathop{\rm #1}\nolimits}}
\numberwithin{equation}{section}
\begin{document}
\nocite{*}
\bibliographystyle{plain}

\begin{abstract}
Given a free resolution of an ideal $\a$ of holomorpic functions there
is an associated residue current $R$ that coincides with the classical
Coleff-Herrera product if $\a$ is a complete intersection ideal and whose
annihilator ideal equals $\a$. In the case when $\a$ is an Artinian
monomial ideal, we show that the singularities of $R$ are small in a
certain sense if and only if $\a$ is integrally closed. 
\end{abstract}


\maketitle


\section{Introduction}

Given (a germ of) a holomorphic function $f$ at $0\in \C^n$, 
Herrera and Lieberman, \cite{HL}, proved that one can define the \emph{principal
value} current 
\begin{equation}\label{borjan}
\frac{1}{f}
.\xi:=\lim_{\epsilon\to 0}\int_{|f|^2>\epsilon}\frac{\xi}{f},
\end{equation}
for test forms $\xi$.
It follows that 
$\dbar(1/f)$ is a current with support on the variety
of $f$; such a current is called a \emph{residue current}. 
The \emph{duality principle} asserts that 
a holomorphic
germ $g$ is in the ideal 
generated by $f$ if and only if $g\dbar
(1/f)=0$.

Given a (locally) free resolution 
\begin{equation}\label{foten}
    0 \to E_{N} \stackrel{\varphi_{N}}{\longrightarrow} E_{N-1} \to \cdots \to
    E_1 \stackrel{\varphi_1}{\longrightarrow} E_{0}\to 0 
\end{equation}
of a general ideal (sheaf) $\a$, in \cite{AW} with Andersson we defined a vector (bundle) valued 
residue current $R$ with support on the variety of $\a$ 
that satisfies the duality principle for $\a$, cf.\ Section ~\ref{nya} below. If
  $\a$ is Cohen-Macaulay, then $R$ is essentially independent of the
  resolution. In particular, if 
 $(E,\varphi)$ is the Koszul complex of a minimal set of generators 
$f_1,\ldots, f_p$ of  
a complete intersection ideal, 
then $R$
coincides with the classical Coleff-Herrera product, \cite{CH},  
\begin{equation}\label{coleff}
\dbar\frac{1}{f_p}\wedge\cdots\wedge\dbar\frac{1}{f_1}. 
\end{equation}
By means of these residue currents we were able to extend several results
previously known for complete intersections.
These currents have also turned out to be 
particularly useful for analysis on singular spaces;
for example, 
they have been used to obtain new results on the $\dbar$-equation,
\cite{AS}, 
and new global versions of the classical
Brian\c{c}on-Skoda theorem, \cite{AWSemester}, on singular spaces.

In view of the duality principle the residue current $R$ can be thought of as
a current representing the ideal $\a$; this idea is central to many
applications of residue currents, including the ones mentioned above. 
Various properties of the ideal $\a$ are reflected in the residue
current $R$. 
For example, 
$R$ has a natural geometric decomposition corresponding to a primary
decomposition of $\a$, see \cite{AW2}, and the 
fundamental cycle of $\a$ admits a natural
representation in terms of $R$ that generalizes the classical
Poincar\'e-Lelong formula, see \cite{LW}.

In this note we study the singularities of $R$ and show that, for a 
monomial ideal $\a$, they are small in a certain
sense if and only if $\a$ is 
integrally closed.
For simplicity we will work in a
  local setting; let $\Ok_0^n$ be the ring of germs of holomorphic functions at
$0\in\C^n$ and let $\a$ be an ideal in $\Ok_0^n$. 
Recall that $g\in \Ok_0^n$ 
is in the
\emph{integral closure} $\overline \a$ of $\a$ if $|g|\leq 
C |f|$, 
where $C$ is a constant and $f$ is a set of generators $f_1,\ldots,
f_m\in\Ok_0^n$ of $\a$, or equivalently if $g$
satisfies a monic equation $g^q + h_1 g^{q-1}+\cdots + h_q=0$, where
$h_k\in \a^k$. If $\overline \a=\a$, then $\a$ is said to be
\emph{integrally closed}. 
Assume that $\pi:\widetilde X\to (\C^n,0)$ is a log-resolution of
  $\a$, i.e., $\widetilde X$ is a complex manifold, $\pi$ is a
  biholomorphism outside the variety of $\a$, 
and $\a\cdot \Ok_{\widetilde
    X}=\Ok_{\widetilde
    X}(-D)$, where $D=\sum_{i=1}^Nr_i D_i$ is
  an  effective divisor with simple normal crossings
  support. 
Then $\overline \a=\pi_* (\Ok_{\widetilde X}(-D))$, which means that
$g\in\Ok_0^n$ is in $\overline \a$ if and only if 
$\ord_{D_i}(g)\geq r_i$ for each $i$, where $\ord_{D_i}$ denotes the divisorial valuation
defined by the prime divisor $D_i$.

If $\pi: \widetilde X\to (\C^n,0)$ is a common log-resolution
of $\a$ and the Fitting 
ideals of $\a$, i.e., the ideals generated by the 
minors of optimal rank of the $\varphi_k$ in \eqref{foten}, 
then there is a section $\sigma$ of a line bundle $L=\Ok_{\widetilde X}(-F)$
over $\widetilde X$ and a current $\widetilde R$ on $\widetilde X$ such
that
\begin{equation}\label{bara}
\widetilde R\wedge \pi^* dz= \eta\wedge \dbar \frac{1}{\sigma},
\end{equation} 
where $dz=dz_1\wedge \cdots \wedge dz_n$ and $\eta$ is a vector
(bundle) valued smooth form with values in $L$, 
such that $\pi_*\widetilde R=R$, see \cite[Section~2]{AW} and Section~\ref{nya} below. 
The observation that residue currents in this way can be seen as
pushforwards of residue currents of principal ideal sheaves  
is crucial
for many applications of residue currents, cf.\ Section ~\ref{asm} below.

Assume that 
\begin{equation}\label{jul}
\sigma=\sigma_1^{a_1}\cdots \sigma_N^{a_N},
\end{equation} 
where $\sigma_i$ are holomorphic sections of line bundles $\Ok(-D_i)$
defining the prime divisors $\F_i$ of $F=\sum_{i=1}^N a_i D_i$.  
We are interested in the exponents
$a_i$. Naively one could hope that one could choose $a_i$ as 
$r_i=\ord_{\F_i}(\a)$. However, this can only be true if $\a$ is integrally closed. 
Indeed, assume that $R=\pi_* \widetilde R$, where $\widetilde R$
satisfies \eqref{bara} with $\sigma$ given by
\eqref{jul} with $a_i \leq r_i$. 
Take $g\in \overline \a$; then 
$\ord_{\F_i}(g)\geq r_i$ for each $i$ and thus $\pi^* g = \sigma g'$,
where $g'$ is a holomorphic section of $L^{-1}$. 
Therefore, by the duality principle,
$\pi^* g\dbar (1/\sigma)=0$ and so 
$\pi^* g\widetilde R=0$, which implies that $gR=0$, and hence 
$g\in \a$. 
To conclude, if we can choose $a_i\leq r_i$ 
for each $i$, then
$\a$ is integrally closed.

We are interested in whether the converse holds, i.e., if $\a$
is integrally closed, is it then 
always possible to find an $\widetilde R$ as above with $\sigma$ given
by \eqref{jul} with  
$a_i\leq r_i$? 
In this note we answer this question affirmatively when 
$R$ is the residue current associated with a cellular resolution,
introduced by Bayer-Sturmfels \cite{BS}, see Section ~\ref{zlatan}
below, of an Artinian, i.e., $0$-dimensional,
monomial ideal, and when we moreover allow $\eta$ to be semi-meromorphic, i.e.,
of the form $(1/f)\omega$, where $\omega$ is smooth and $f$ is holomorphic. 
For the
definition of the 
product $\eta\wedge \mu$, where $\eta$ is a semi-meromorphic form and $\mu$ is a
residue current, or more generally a so-called pseudomeromorphic current, see Section ~\ref{asm} below. Multiplication from the
left by $\eta$ does not increase the singularities 
in the sense that if $g$ is a holomorphic function such that $g\mu=0$, then $g\eta\wedge \mu=0$.

\begin{thm}\label{huvud} 
Let $M\subset\Ok_0^n$ be an integrally closed Artinian monomial ideal and let $R$ be the residue
current associated with a cellular resolution of $M$. 
Then there is a log-resolution $\pi:\widetilde X\to(\C^n,0)$ of $M$,
such that $M\cdot \Ok_{\widetilde X}=\Ok_{\widetilde X} (-D)$, where 
$D=\sum_{i=1}^N r_i D_i$,
and a current 
$\widetilde R$ on $\widetilde X$ such that $\pi_*\widetilde R=R$ and
\begin{equation}\label{underbara}
\widetilde R\wedge \pi^* dz= \eta\wedge \dbar
\frac{1}{\sigma_1^{r_1}\cdots \sigma_N^{r_N}},
\end{equation} 
where $\sigma_i$ is a holomorphic section defining $D_i$ and $\eta$ is a
semi-meromorphic form.
\end{thm}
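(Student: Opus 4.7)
The plan is to construct $\widetilde R$, the section $\sigma$, and the form $\eta$ explicitly on a toric log-resolution of $M$, exploiting the monomial structure of both the ideal and its cellular resolution.

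First, I would take $\pi\colon\widetilde X\to(\C^n,0)$ to be a smooth toric log-resolution of $M$ obtained from a sufficiently fine simplicial refinement of the normal fan of the Newton polyhedron $\np(M)$. Because a cellular resolution of a monomial ideal has monomial differentials, its Fitting ideals are monomial, and a fine enough toric refinement makes $\pi$ a common log-resolution of $M$ and the Fitting ideals, putting us in the framework of \eqref{bara}. The divisors $D_i$ are indexed by the rays of the fan with primitive vectors $v_i$, and $r_i=\min_{\alpha\in \np(M)}\la v_i,\alpha\ra$.

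Next, I would compute $\widetilde R$ locally on each maximal smooth cone. Choose toric coordinates $t_1,\ldots,t_n$ so that the $\sigma_i$ corresponding to rays of the cone are coordinate functions and the other $\sigma_j$ are units. In these coordinates the pulled-back cellular differentials $\pi^*\varphi_k$ are monomial matrices, whose generalized inverses off $\{t_1\cdots t_n=0\}$ are monomial fractions. The construction of \cite{AW} then yields an explicit local expression for $\widetilde R\w\pi^*dz$ whose denominator is a product of toric coordinates to suitable powers, with a semi-meromorphic prefactor built out of the chosen inverses.

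The crucial step is to rewrite each local contribution in the form $\eta'\w\dbar(1/t_1^{r_1}\cdots t_n^{r_n})$ with $\eta'$ semi-meromorphic. Here the integral closure assumption enters: it implies that every vertex of $\np(M)$ lies in $M$, so for each maximal cone there is a generator $z^\alpha$ of $M$ with $\la v_i,\alpha\ra=r_i$ for every ray $v_i$ of the cone. This generator pins down the exponents appearing in the local residue expression, and, together with the identity $t^{a-r}\,\dbar(1/t^a)=\dbar(1/t^r)$ (in multi-index notation, valid for $a\geq r$ componentwise), lets one extract the factor $\dbar(1/t^r)$ and absorb the surplus monomials into a semi-meromorphic $\eta'$.

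The final step is to patch the local constructions into a global current $\widetilde R$ on $\widetilde X$ and a global semi-meromorphic form $\eta$, and to verify $\pi_*\widetilde R=R$ by comparing with the pushforward formula for the \cite{AW} current. The main obstacle is the third step: carrying out uniformly across all toric charts the combinatorial matching between the exponents that naturally appear in the local residue expressions from the cellular resolution and the multi-indices $r_i$ coming from $\np(M)$. This matching is combinatorial in nature and relies essentially on $M$ being integrally closed.
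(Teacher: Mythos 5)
Your setup (a toric log-resolution refining the normal fan of $\np(M)$) matches the paper's, but the ``crucial step'' you flag as the main obstacle is in fact a genuine gap, and the mechanism you propose for it cannot work as stated. The local expressions produced by the construction of \cite{AW} from the Fitting ideals have denominators $\sigma=t_1\cdots t_{\min(n,N)}$, whose order along $D_i$ is in general far larger than $r_i$ --- already for the complete intersection $(z_1^\ell,\ldots,z_n^\ell)$ one gets $(\rho_1+\cdots+\rho_n)\ell-n+1$ rather than $(n-1)\ell+1$, cf.\ Example~\ref{bruno}. Your plan is to pass from these large exponents to the target exponents $r_i$ by ``absorbing surplus monomials into a semi-meromorphic $\eta'$,'' but the relevant identities only go one way: by \eqref{pippi2} one can \emph{lower} the pole order of a residue factor by multiplying with a holomorphic monomial, whereas $(1/s^c)\,\dbar(1/s^b)=0$ by \eqref{pippi}, so one cannot \emph{raise} the order of the $\dbar$-factor along the supporting divisor by a semi-meromorphic multiplication. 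Concretely, Example~\ref{kronan} lets you change the exponents freely only in the principal-value directions; in the residue direction it requires the actual exponent $a_1$ to satisfy $a_1\le r_1$. Establishing that inequality is the entire content of the theorem, and neither the observation that the vertices of $\np(M)$ are exponents of generators nor a ``combinatorial matching'' of exponents supplies it.

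The paper gets $a_1\le r_1$ by a quite different route, which you would need some version of. It uses the explicit componentwise description from \cite{W}: each entry is $R_\tau=c_\tau\,\dbar(1/z_n^{\alpha_n})\w\cdots\w\dbar(1/z_1^{\alpha_1})$ where $z^\alpha$ is the label of the $n$-cell $\tau$. Since $z^{\alpha-\1}R_\tau\ne0$, duality gives $z^{\alpha-\1}\notin M$, and integral closure enters through the \emph{valuative criterion}: there is a Rees divisor $D_1$ with $\ord_{D_1}(z^{\alpha-\1})<\ord_{D_1}(M)$. One then realizes $R_\alpha$ not via the cellular resolution's Fitting ideals but via the Jonsson--Wulcan description \cite{JW} of the Bochner--Martinelli current of an auxiliary monomial complete intersection $\m^\beta$ whose unique Rees divisor is $D_1$; that current is supported on $D_1$ with exponent $n\ord_{D_1}(\m^\beta)=\ord_{D_1}(z^\beta)$ there, and after dividing by $z^{\beta-\alpha}$ and $dz$ the exponent along $D_1$ drops to at most $\ord_{D_1}(z^{\alpha-\1})+1\le r_1$. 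Only then does the rewriting of Example~\ref{kronan} apply. Without this (or an equivalent) input your argument stalls exactly where you predicted it would.
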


The proof uses explicit descriptions of residue currents of
monomial ideals, \cite{W}, as well as so-called
Bochner-Martinelli residue currents, \cite{JW}, cf.\ Sections
~\ref{zlatan} and 
~\ref{juvel} below, and it should be possible to extend to general, not
necessarily Artinian, monomial ideals.
There is a brief discussion of this and other aspects of our result at  
the end of Section ~\ref{navidad}.

\smallskip 
\noindent
\textbf{Acknowledgment.} I would like to thank Mats Andersson for
valuable discussions on the topic of this paper. I am also very grateful to
the referee for the careful reading, for pointing out some obscurities and
mistakes in
a previous version of this paper,
and for several useful comments and suggestions.

\section{Preliminaries}

\subsection{Pseudomeromorphic  currents}\label{asm}

To get a coherent approach to principal value and residue currents, 
in \cite{AW2} with Andersson we introduced the sheaf of \emph{pseudomeromorphic} currents which
essentially are push-forwards of tensor products of principal value
and residue currents times smooth forms, like 
\[
\frac{1}{s_2^{b_2}\cdots s_m^{b_m}}
\omega \wedge \dbar 
\frac{1}{s_1^{b_1}},
\]
where $s_1,\ldots, s_m$ are (local) coordinates in some $\C^m$ and
$\omega$ is a smooth form. 
Principal value currents and the   
residue currents mentioned in this paper  
are typical examples of pseudomeromorphic currents.

Pseudomeromorphic currents have a geometric nature similar to positive
closed currents. 
For example, the {\it dimension principle} states that if the pseudomeromorphic current
$\mu$ has bidegree $(*,p)$ and support on a variety of codimension larger than 
$p$, then $\mu$ vanishes.  Moreover 
if $\mu$ is a pseudomeromorphic current and $\1_V$ is the characteristic
function of an analytic variety $V$, then 
the product $\1_V\mu$, defined through a suitable regularization, is a well-defined pseudomeromorphic current with support on
$V$, 
see \cite[Proposition~2.2]{AW2}.

A current of the form $(1/f)\omega$, where $f$ is a holomorphic section
of a line bundle $L\to X$ and $\omega$ is a smooth form with values in
$L$ is said to be \emph{semi-meromorphic}. 
If $\eta$ is a semi-meromorphic form, or more generally the
push-forward under a modification of a semi-meromorphic form, and
$\mu$ is a 
pseudomeromorphic current, there is a unique pseudomeromorphic current
$\eta\wedge\mu$ that coincides with the usual product 
where $\eta$ is smooth and such that $\1_{ZSS(\eta)} \eta\wedge \mu=0$,
where $ZSS(\eta)$ is the smallest analytic set containing the set where $\eta$ is not
smooth, see, e.g., \cite[Section~4.2]{AW3}. 
If $h$ is a holomorphic tuple such that $\{h=0\}=ZSS(\eta)$ and $\chi(t)$ is (a smooth approximand of) the characteristic function of the
interval $[1,\infty)$, 
then 
\begin{equation*}
\eta\wedge \mu=\lim_{\epsilon\to 0}\chi(|h|^2/\epsilon)
\eta\wedge\mu.  
\end{equation*}
It follows that for $c>0$  
\begin{equation}\label{pippi}
\frac{1}{s_i^c}~ 
\frac{1}{s_i^b}=  
\frac{1}{s_i^{b+c}}, \quad 
\frac{1}{s_i^c}~ 
\dbar\frac{1}{s_i^b}=  0.
\end{equation} 
For further reference, in view of \eqref{borjan}, note  that 
\begin{equation}\label{pippi2}
s_i^c~ 
\frac{1}{s_i^b}=  
\frac{1}{s_i^{b-c}}, \quad 
s_i^c~ 
\dbar\frac{1}{s_i^{b}}= \dbar\frac{1}{s_i^{b-c}}.
\end{equation} 

\begin{ex}\label{kronan}
Assume that $s_1,\ldots, s_n$ are (local) coordinates in $\C^n$. 
If $D=\{s_1=0\}$, by the dimension
principle, 
\[
\1_D ~\dbar \frac{1}{s_1^{b_1}\cdots s_n^{b_n}}= 
\frac{1}{s_2^{b_2}\cdots s_n^{b_n}} ~ \dbar \frac{1}{s_1^{b_1}}. 
\]
In view of \eqref{pippi} and \eqref{pippi2} 
it follows that for $c_1\geq b_1$ and any choices of $c_2,\ldots,
c_n$, 
\[
\1_D ~\dbar \frac{1}{s_1^{b_1}\cdots s_n^{b_n}}=
\1_D ~ s_1^{c_1-b_1}\cdots s_n^{c_n-b_n} 
~\dbar \frac{1}{s_1^{c_1}\cdots s_n^{c_n}},  
\]
where the factor $s_i^{c_i-b_i}$ should be understood as a principal
value if $c_i<b_i$. 
\end{ex}

\subsection{Residue currents from complexes of vector bundles}\label{nya}
Let 
\begin{equation}\label{komplexet}
    0 \to E_{N} \stackrel{\varphi_{N}}{\longrightarrow} E_{N-1} \to \cdots \to
    E_1 \stackrel{\varphi_1}{\longrightarrow} E_{0}\to 0 
\end{equation}
be a complex of Hermitian vector bundles over a complex manifold $X$ of
dimension $n$ 
that is exact outside a variety $Z\subset X$. In \cite{AW} with
Andersson we constructed an $\End (\oplus E_k)$-valued 
residue current
$R$ with support on $Z$, 
that in some sense measures the exactness of the associated sheaf complex 
\begin{equation}\label{sektioner}
    0 \to \Ok(E_{N}) \stackrel{\varphi_{N}}{\longrightarrow} \Ok(E_{N-1}) \to \cdots \to
    \Ok(E_1) \stackrel{\varphi_1}{\longrightarrow} \Ok(E_{0})\to 0 
\end{equation}
of holomorpic sections. 
If \eqref{sektioner} is exact, then $R$ satisfies the
duality principle, which means that if $\xi$ is a section of $E_0$
that is generically in the image of $\varphi_1$, 
then $R\xi=0$ if and only if $\xi\in
\Image \varphi_1$; in particular, if \eqref{sektioner} is a free resolution of an
ideal $\a\subset \Ok_0^n$, then the
\emph{annihilator ideal} of $R$, i.e., the ideal of germs of
holomorphic functions $g$ such that $gR=0$,  
equals $\a$. 
Moreover, then $R$ is of the form $R=\sum R_k$, where
$R_k$ is a $\Hom(E_0,E_k)$-valued pseudomeromorphic current of bidegree $(0,k)$. 
Note that $R_k$ vanishes for $k < \codim Z$ by the dimension
principle, and for $k>n$ for degree reasons. In particular, if \eqref{sektioner} is a free resolution of
an Artinian monomial ideal in $\Ok_0^n$, then $R=R_n$. 

Let $\rho_k$ be the optimal rank of $\varphi_k$, and let
$\pi:\widetilde X\to X$ be a common log-resolution of the ideal
sheaves generated by the $\rho_k$-minors of the $\varphi_k$, 
i.e., such that the pullback of the section $\det^{\rho_k} \varphi_k$ of
$\Lambda^{\rho_k} E_k^* \otimes \Lambda^{\rho_k} E_{k-1}$ is of the form $t_k\rho_k'$,
where $t_k$ is a section of some line bundle $L_k$ and $\varphi_k'$ is
a nonvanishing section of $L_k^{-1}\otimes \Lambda^{\rho_k}
\pi^* E_k^* \otimes \Lambda^{\rho_k}
\pi^* E_{k-1}$. 
It was proved in \cite[Section~2]{AW} that there is a current
$\widetilde R$ on $\widetilde X$ such that $\pi_* \widetilde R = R$
and $\widetilde R = \omega \wedge \dbar (1/\sigma)$, where $\omega$ is smooth and $\sigma=t_1\cdots
t_{\min (n,N)}$. 
The form $\omega$ may vanish along the divisor $F$ of $\sigma$, and
thus in general it may be possible to find a $\sigma$ that vanishes to lower
order along $F$ than $t_1\cdots t_{\min (n,N)}$, cf.\ Example ~\ref{bruno}
below.

\subsection{Monomial ideals and cellular resolutions}\label{zlatan}
Let us briefly recall the construction of cellular resolutions due to
Bayer-Sturmfels, \cite{BS}. Let $M$ be a monomial ideal in the
polynomial ring $S:=\C[z_1,\ldots,z_n]$, i.e.,
$M$ is generated by monomials $m_1, \ldots, m_r$ in $S$. 
Moreover, let $\K$ be an oriented polyhedral cell complex, where the vertices are
labeled by the generators $m_i$ and the face $\tau$ of $\K$ is 
labeled by the least common multiple $m_\tau$ of the labels $m_i$
of the vertices of $\tau$. 
Then with $\K$ there is an associated graded complex of
$S$-modules. For $k=0,\ldots, \dim \K+1$, let $A_k$ be the free
$S$-module with one generator $e_\tau$ in degree $m_\tau$ for each
$\tau\in K_k$, where $\K_k$ denotes the faces of $\K$ of dimension $k-1$ ($K_0$ should be interpreted as $\{\emptyset\}$) and let 
$\varphi_k:A_k\to A_{k-1}$ be defined by 
\begin{equation}\label{knape}
\varphi_k: e_\tau  \mapsto  
\sum_{\tau'\subset \tau} \sign(\tau',\tau)~\frac{m_\tau}{m_{\tau'}}~ e_{\tau'},
\end{equation}
where $\sign(\tau',\tau)$ is a sign coming from the orientation of
$\K$. 
Now the complex $(A, \varphi)$ is exact precisely if the labeled
polyhedral cell complex $\K$ satisfies a certain acyclicity
condition, see \cite[Proposition~1.2]{BS}. We then say that the complex $(A, \varphi)$ is a
\emph{cellular resolution} of $M$. 
Any monomial ideal admits a cellular resolution,
cf.\ \cite[Proposition~1.5]{BS}.

Let $M$ denote also the monomial ideal of germs of holomorphic
functions at $0\in \C^n_{z_1,\ldots,
  z_n}$ generated by the monomials
$m_1,\ldots, m_r$. 
Since $\Ok_0^n$ is flat over $S$,  
$(A, \varphi)$ induces a
free resolution of $M\subset \Ok_0^n$. More precisely, for $k=0,\ldots, N=\dim K+1$, let $E_k$ be
a trivial bundle over (a neighborhood of $0$ in) ~$\C^n$ 
with a global frame $\{e_\tau\}_{\tau\in \K_k}$,
endowed with the trivial metric, and where the
differential $\varphi_k$ is given by ~\eqref{knape}. 
Then \eqref{sektioner} 
is exact if $(A,\varphi)$ is. 
We will think of monomial ideals sometimes as ideals in $S$, sometimes as ideals in $\Ok_0^n$, and sometimes as ideals in
the ring of entire functions in $\C^n$.

In \cite{W} we computed the residue current $R$ of 
a cellular resolution of a monomial ideal $M$. 
Note that if $M$ is Artinian, then $R=R_n$ is of the form $R=\sum R_\tau e_\tau\otimes
  e_\emptyset^*$, i.e., with one component for each $\tau\in K_n$. 
Proposition~3.1 in that paper asserts that if 
$z^\alpha:=z_1^{\alpha_1}\cdots z_n^{\alpha_n}$ is the label of
$\tau$, then 
$R_\tau=c_\tau R_\alpha$, where $c_\tau\in\C$ and 
\begin{equation}\label{sara}
  R_\alpha= \dbar\frac{1}{z_n^{\alpha_n}}\wedge\cdots \wedge \dbar \frac{1}{z_1^{\alpha_1}}.
\end{equation}

\subsection{Toric log-resolutions}\label{toric}
For an (Artinian) monomial ideal $M$ in $\C^n$ it is possible to find a
log-resolution $\pi:\widetilde X\to \C^n$ where $\widetilde X$ is a toric variety. 
Let us briefly recall this construction, which can be found, e.g., in
\cite[p.\ 82]{BGVY}. 
 For a general reference on toric varieties, see,
e.g., \cite{Fu}. 
A \emph{(rational strongly convex) cone} in $\R^n$ is a set of the
form $\cone=\sum\R_+v_i$, where $v_i$ are in the lattice $\Z^n$, that
contains no line; 
here $\R_+$ denotes the non-negative real
numbers. A cone is \emph{regular} if the $v_i$ can be chosen as part
of a basis for the lattice $\Z^n$. A \emph{fan} $\Delta$ is a finite collection
of cones such that all faces and intersections of
cones in $\Delta$ are in $\Delta$; $\Delta$ is \emph{regular} if all
cones are regular. 
A regular fan $\Delta$ determines a smooth toric variety $X(\Delta)$, 
obtained by patching together affine toric varieties 
corresponding to the cones in $\Delta$.

Assume that $M$ is an Artinian monomial ideal in $\C^n_{z_1,\ldots,
  z_n}$. 
Recall that the \emph{Newton polyhedron} $\np(M)$ of $M$ is
defined as the convex hull in $\R^n$ of the exponents of monomials in
$M$. 
Let $\mathcal S(M)$ be the collection of cones of the form
$\cone=\R_+\rho\subset \R^n_+$, 
where $\rho$ is a normal vector of a compact facet (face of maximal
dimension) 
of $\np (M)$. 
Let $\Delta$ be a regular fan that contains $\mathcal S(M)$ and such
that the \emph{support}, i.e., the union of all cones in $\Delta$,
equals $\R^n_+$.
The cones in $\mathcal S(M)$ determine a fan with support $\R^n_+$ and
by refining this is always possible to find such a $\Delta$. 
Then $\pi:X(\Delta)\to \C^n$ is a log-resolution of
$M$. The prime divisors $D_i$ of the exceptional divisor correspond to 
one-dimensional cones $\cone_i=\R_+ \rho^i$ in
$\Delta$ and $\ord_{D_i}$ are monomial valuations 
(i.e., determined by their values on $z_1,\ldots, z_n$). More
precisely, if $\rho=(\rho_1,\ldots, \rho_n)$ is the first non-zero
lattice point met along 
$\cone_i$, then $\ord_{D_i}$ is the monomial valuation $\ord_\rho(z_1^{a_1}\cdots z_n^{a_n}):=\rho_1 a_1+\cdots
+\rho_n a_n$.

\subsection{Rees valuations}\label{svara}
Given a non-zero ideal $\a\subset \Ok_0^n$, let $\nu:X^+\to (\C^n,0)$
be the normalized blow-up of $\a$ and let $D=\sum r_i D_i$ be the
exceptional divisor, such that $\a\cdot
  \Ok_{X^+}=\Ok_{X^+}(-D)$.
The divisorial valuations $\ord_{D_i}$ are called the \emph{Rees
  valuations} of $\a$, see, e.g., \cite[Section~9.6.A]{LazII}. 
Then $\overline \a=\nu_* (\Ok_{X^+}(-D))$, i.e., $g\in \overline \a$
if and only if $\ord_{D_i}(g)\geq \ord_{D_i}(\a)$ for each $i$. 
If $\pi:\widetilde X\to (\C^n, 0)$ is any log-resolution of $\a$ 
(and thus factors through the normalized blow-up) with exceptional
divisor $D=\sum r_i D_i$, following \cite{JW}
we say that the prime divisor $D_i$ 
is a \emph{Rees divisor} if $\ord_{D_i}$ is a Rees valuation.

Let $M$ be an Artinian monomial ideal (at $0$) in $\C^n_{z_1,\ldots,
  z_n}$. 
Then the Rees valuations are monomial and in one-to-one correspondence with the
compact facets of $\np(M)$. 
We say that the normal vector $\rho$ of a facet $\tau$ is 
\emph{primitive} if it has minimal non-negative entries, i.e., if $\rho$ is the first lattice point met along the cone
$\R_+\rho\subset\R^n_+$. If $\rho$ is a primitive normal vector of a
compact facet $\tau$, then the Rees valuation
corresponding to $\tau$ is the monomial valuation $\ord_\rho$, see, e.g., \cite[Theorem~10.3.5]{HS} and
\cite[p.\ 82]{BGVY}. 
It follows that in the toric log-resolution $\pi : X(\Delta)\to \C^n$
in the previous section, $D_i$ is a Rees
divisor of $M$ if
and only if the corresponding cone $\cone_i$ is in 
$\mathcal S(M)$.

\begin{ex}\label{morgon}
Given $\beta = (\beta_1,\ldots, \beta_n)\in \Np^n$, we let
$\m^\beta$ 
denote the Artinian monomial complete intersection ideal generated 
by $z_1^{\beta_1}, \ldots, z_n^{\beta_n}$. Then $\np(\m^\beta)$ has a
unique compact facet, namely the simplex $\tau$ with vertices 
$(\beta_1,0\ldots, 0), (0,\beta_2,0,\ldots, 0),$ $\ldots, (0,\ldots, 0,
\beta_n)$. 
Let $\rho_j = \beta_1\cdots \beta_{j-1}\beta_{j+1}\cdots \beta_n$;
then $\rho=(\rho_1,\ldots, \rho_n)$ is a normal vector of $\tau$ and
thus the unique Rees valuation of $\m^\beta$ is of the form
$r\ord_\rho$ for some $r\in \Q$. 
Note that $\ord_\rho(z_i^{\beta_i})=\ord_\rho(\m^\beta)$ for all ~$i$. 
\end{ex}

\subsection{Bochner-Martinelli residue currents}\label{juvel} 
Let $f = (f_1,\ldots,f_p)$ be a tuple (of germs) of holomorphic
functions at $0\in \C^n$ 
and let \eqref{komplexet} 
be the Koszul complex of $f$, i.e., consider
$f$ as a section $f=\sum f_j e_j^*$ of a trivial rank $p$ bundle $E^*$
over (a neighborhood of $0$ in) $\C^n$ 
with a frame $e_1^*,\ldots, e_p^*$
, let $E_j=\bigwedge^j E$, where $E$
is the dual bundle of $E^*$, and let
$\varphi_k=\delta_f$ be contraction with $f$.
Assume that the complex is equipped with the trivial metric. 
Then the coefficients of the associated
residue current are the so-called \emph{Bochner-Martinelli residue
  currents} introduced by Passare, Tsikh, and Yger, \cite{PTY}. In particular, if $f_1,\ldots,
f_p$ are minimal generators of a complete intersection ideal, then the
only nonvanishing coefficient of $R=R_p$ equals the Coleff-Herrera
product \eqref{coleff},  
see \cite[Theorem~4.1]{PTY} and \cite[Theorem~1.7]{A}.

In \cite{JW}, together with Jonsson, we gave a geometric description
of the residue current $R$ in this case in terms of the Rees
valuations of the ideal $\a=\a(f)$ generated by $f$.  
It is proved in Section~4 in that paper that 
if $\pi:\widetilde X\to
(\C^n,0)$ is a log-resolution of $\a$, then there is a current
$\widetilde R$ such that $\pi_*\widetilde R=R$ and
$\widetilde R$ has support on the Rees divisors of $\a$. 
Moreover if $D=\sum_{i=1}^N r_i D_i$ is the exceptional divisor of $\pi$, then 
\begin{equation}\label{gaspa}
\widetilde R = \omega \wedge \dbar \frac{1}{\sigma_1^{n r_1}\cdots \sigma_N^{n r_N}},
\end{equation}
where 
$\sigma_ i$ is a holomorphic section defining $D_i$ and $\omega$ is a
smooth form.

\begin{ex}\label{bruno} 
Let $\a_\ell=(z_1^\ell,\ldots, z_n^\ell)\subset \Ok_0^n$, and let \eqref{komplexet} be the
Koszul complex of $(z_1^\ell,\ldots,
z_n^\ell)$. Then the $\rho_k$-minors of the $\varphi_k$ are monomials of degree
$\rho_k\ell$. It follows that the blow-up 
of $\C^n$ at $0$ is a common log-resolution of $\a_\ell$ and the ideals generated by the
$\rho_k$-minors of the $\varphi_k$. Let $D=\{\sigma_1=0\}$ denote the
exceptional (prime) divisor. Then $\ord_D(z_i)=1$ for each $i$ and
$\ord_D(dz)=n-1$. It follows that the section $t_k$ from Section
\ref{nya} is of the form $t_k=\sigma_1^{\rho_k \ell}$, so that
according to Section \ref{nya} there
is an $\widetilde R$ that satisfies \eqref{bara} with $\sigma=\sigma_1^{(\rho_1+\cdots +
  \rho_n)\ell - n+1}$ and where $\eta$ is smooth. 
However, noting that $\ord_D(\a_\ell)=\ell$, in view of 
  \eqref{gaspa}, we can, in fact, choose $\widetilde R$ with
  $\sigma=\sigma_1^{(n-1)\ell+1}$.  
\end{ex}

\section{Proof of Theorem ~\ref{huvud}}\label{navidad} 
Theorem ~\ref{huvud} is a direct consequence of the following slightly more precisely
formulated result.

\begin{thm}\label{knaochta}
Let $M\subset \Ok_0^n$ be an integrally closed Artinian monomial ideal and let
$R$ be the residue current associated with a cellular resolution of
$M$ corresponding to the labeled polyhedral cell complex
$\K$.  
Then there is a log-resolution $\pi:\widetilde X\to(\C^n,0)$ of $M$
and a current $\widetilde R$ on $\widetilde X$ with support on the Rees divisors of
$M$ such that $\pi_*\widetilde R= R$ and $\widetilde R\wedge \pi^* dz$
is of the form \eqref{underbara}, where $D=\sum r_i D_i$, 
$\sigma_i$, and $\eta$ are as in Theorem ~\ref{huvud}. 
More precisely, for each $\tau\in \K_n$, there is a current $\widetilde
R_\tau$ on $\widetilde X$ and a Rees divisor $D_\tau$ such that $\widetilde
R_\tau$ has support on $D_\tau$, 
$\pi_*\widetilde R_\tau= R_\tau$, and
\begin{equation*}
\widetilde R_\tau\wedge \pi^* dz= \eta_\tau\wedge \dbar
\frac{1}{\sigma_1^{r_1}\cdots \sigma_N^{r_N}},
\end{equation*} 
where $\eta_\tau$ is a semi-meromorphic form. 
\end{thm}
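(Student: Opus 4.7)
The plan is to prove the theorem cell by cell. By \cite[Proposition~3.1]{W}, $R=\sum_{\tau\in\K_n}R_\tau\,e_\tau\otimes e_\emptyset^*$ with $R_\tau=c_\tau R_\alpha$ and $R_\alpha=\dbar(1/z_n^{\alpha_n})\wedge\cdots\wedge\dbar(1/z_1^{\alpha_1})$ for $\tau$ labelled by $z^\alpha$, and by Section~\ref{juvel} each $R_\alpha$ is also the Bochner--Martinelli residue current of the Koszul complex of $(z_1^{\alpha_1},\ldots,z_n^{\alpha_n})$. I would first construct a common toric log-resolution $\pi\colon\widetilde X\to(\C^n,0)$ of $M$ and of all the $\m^\alpha$ by taking, as in Section~\ref{toric}, a regular fan $\Delta$ whose one-dimensional cones include every ray in $\mathcal S(M)$ together with each ray $\R_+\rho^\alpha$, where $\rho^\alpha$ is the primitive normal of the unique compact facet of $\np(\m^\alpha)$; see Example~\ref{morgon}. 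Every Rees divisor of $M$ and the unique Rees divisor $E_\tau$ of each $\m^\alpha$ then appears as a toric prime divisor of $\widetilde X=X(\Delta)$, and \cite[Section~4]{JW} yields a lift $\widetilde R_\tau^{\mathrm{JW}}$ of $R_\tau$ of the form \eqref{gaspa}, supported on $E_\tau$ with exponents $n\,\ord_{D_i}(\m^\alpha)$.

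The heart of the proof is to promote $\widetilde R_\tau^{\mathrm{JW}}$ to a current $\widetilde R_\tau$ supported on a Rees divisor $D_\tau$ of $M$ and of the form \eqref{underbara} with exponents $r_i=\ord_{D_i}(M)$. Integral closedness enters decisively here: since $z^\alpha$ is an lcm of generators of $M$, we have $z^\alpha\in M=\overline M$, and hence $\ord_{D_i}(z^\alpha)\ge r_i$ for every toric prime divisor $D_i$. Working in a toric chart in which $D_\tau$ appears as a coordinate hyperplane, I would apply $\sigma_i^c\,\dbar(1/\sigma_i^b)=\dbar(1/\sigma_i^{b-c})$ from \eqref{pippi2} to lower the residue exponents and absorb the excess holomorphic factors into a semi-meromorphic prefactor $\eta_\tau$; the identity $(1/\sigma_j^c)\,\dbar(1/\sigma_j^b)=0$ from \eqref{pippi} then annihilates the $\dbar(1/\sigma_j^{r_j})$-contributions from prime divisors $D_j\neq D_\tau$, so the resulting current is supported on $D_\tau$. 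Summing over $\tau$ yields $\widetilde R$ with the announced properties.

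The main obstacle is this last manipulation. In general $E_\tau$ is \emph{not} a Rees divisor of $M$ --- already for $M=(x,y)^2\subset\Ok_0^2$ the labels $x^2y$ and $xy^2$ of the top cells in the minimal cellular resolution give ideals whose unique Rees valuations are $\ord_{(1,2)}$ and $\ord_{(2,1)}$, whereas the unique Rees valuation of $M$ is $\ord_{(1,1)}$ --- so the support of the lift must genuinely be moved, and the move is algebraic rather than geometric, relying on the identity in \eqref{pippi}. Consequently $D_\tau$ cannot be chosen arbitrarily: the rays defining $D_\tau$ and $E_\tau$ must span a common cone of $\Delta$, so that the two divisors share a toric chart where the cancellation can take place. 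Making compatible choices $\tau\mapsto D_\tau$ across all top cells, and checking that the chart-wise rewritings glue into a global $\widetilde R_\tau$ with $\pi_*\widetilde R_\tau=R_\tau$, is the combinatorial core of the argument.
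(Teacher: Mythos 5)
Your decomposition into the cell contributions $R_\tau=c_\tau R_\alpha$ and your use of the Bochner--Martinelli description and the rewriting identities \eqref{pippi}, \eqref{pippi2} are the right ingredients, but the proposal has two genuine gaps, and they are precisely at the point where the theorem is hard. First, your claimed use of integral closedness is vacuous: $z^\alpha$ is the lcm of generators of $M$, hence divisible by a generator, hence in $M$ for \emph{any} monomial ideal, and $\ord_{D_i}(z^\alpha)\ge r_i$ then holds automatically. The place where the hypothesis $M=\overline M$ actually enters is the opposite observation: since $z^{\alpha-\1}R_\alpha\neq 0$, duality gives $z^{\alpha-\1}\notin M=\overline M$, and therefore there \emph{exists a Rees divisor} $D_\tau$ of $M$ with $\ord_{D_\tau}(z^{\alpha-\1})<\ord_{D_\tau}(M)$. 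This strict inequality is exactly what makes the exponent $a_{\tau}$ along $D_\tau$ come out $\le r_\tau$ after accounting for $\ord(dz)$; your proposal never produces it.

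Second, the step of ``moving'' the lift from $E_\tau$ (the Rees divisor of $\m^\alpha$) to a Rees divisor $D_\tau$ of $M$ cannot work as described. Multiplication of a pseudomeromorphic current by a semi-meromorphic form can only shrink its support (and $(1/\sigma_j^c)\,\dbar(1/\sigma_j^b)=0$ simply annihilates the current supported on $\{\sigma_j=0\}$); it cannot transport a current supported on $E_\tau$ to a different prime divisor. Since, as you yourself observe, $E_\tau$ is in general not a Rees divisor of $M$, you must construct a genuinely different lift from the start, not massage the Jonsson--Wulcan one. The paper's route is to pick the Rees divisor $D_\tau$ of $M$ \emph{first} (via the strict inequality above, say $\ord_{D_\tau}=\ord_\rho$), then manufacture an auxiliary complete intersection $\m^\beta$ with $\beta_j=k\rho_1\cdots\rho_{j-1}\rho_{j+1}\cdots\rho_n\ge\alpha_j$, whose \emph{unique} Rees divisor is $D_\tau$; the Bochner--Martinelli lift of $R_\beta$ is then supported on $D_\tau$, and multiplying by the holomorphic function $\pi^*(z^{\beta-\alpha})$ turns it into a lift of $R_\alpha$ still supported on $D_\tau$, with exponent along $D_\tau$ bounded by $\ord_{D_\tau}(z^{\alpha-\1})+1\le r_\tau$. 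Only after that does the rewriting of Example~\ref{kronan} apply. Your chart-compatibility discussion at the end addresses a problem that does not arise once the lift is built this way.
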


\begin{proof}
Let $\pi:\widetilde X\to (\C^n,0)$ be a toric 
log-resolution of $M$ in the sense of Section ~\ref{toric}. 
Consider an entry $R_\tau=c_\tau R_\alpha$ of $R$, where $c_\tau\neq 0$, cf.\
Section ~\ref{zlatan}. Note that $z^{\alpha-\1}R_\alpha\neq 0$, where $\1=(1,\ldots, 1)$. It follows that 
$z^{\alpha-\1}R_\tau\neq
0$, and thus $z^{\alpha-\1}R\neq
0$, which by the duality principle implies that $z^{\alpha-\1}\notin
M$. 
Since
$M$ is integrally closed there is a Rees divisor $D_\tau$, that we may
assume equals $D_1$, of
$M=\overline M$ such that
\begin{equation}\label{vanessa}
\ord_{D_1}(z^{\alpha-\1})<\ord_{D_1}(M),
\end{equation}  see Section
~\ref{svara}.

Since $M$ is monomial, 
$\ord_{D_1}$ is a monomial valuation of the form $\ord_\rho$, where $\rho=(\rho_1,\ldots,
\rho_n)$ is the primitive normal vector of 
one of the compact facets of $\np(M)$; in
particular, $\rho_j\in \mathbf N$, see Sections ~\ref{toric} and
~\ref{svara}. 
Let $\gamma_j=\rho_1\cdots\rho_{j-1}\rho_{j+1}\cdots \rho_n$ and choose
$k\in \Np$ such that $\beta_j:=k \gamma_j \geq \alpha_j$ for all $j$. 
Then $\rho$ is the primitive normal vector of the unique compact facet of the Newton polyhedron of
$\m^\beta=(z_1^{\beta_1},\ldots, z_n^{\beta_n})$, so that ${D_1}$ is the unique Rees divisor of
$\m^\beta$, see Example ~\ref{morgon}. 
It follows that $\pi:\widetilde X\to (\C^n, 0)$ is a log-resolution of
$\m^\beta$, see Section ~\ref{toric}.  
Recall from Section ~\ref{juvel} that (the coefficient of) the Bochner-Martinelli residue
current of $(z_1^{\beta_1},\ldots, z_n^{\beta_n})$ equals $R_\beta$, 
defined as in \eqref{sara}. 
Thus in view of Section ~\ref{juvel}, on $\widetilde X$ there is an
$\widetilde R_\beta$ with support on $D_1$ such that $\pi_*\widetilde
R_\beta = R_\beta$ and 
\[
\widetilde R_\beta = 
\omega_\beta \wedge
\dbar \frac{1}{\sigma_1^{n \ord_{D_1}(\m^\beta)}\cdots \sigma_N^{n
    \ord_{D_N}(\m^\beta)}},
\]
where $\omega_\beta$ is smooth.

Let $\widetilde R_\alpha=\pi^* (z^{\beta-\alpha}) \widetilde
R_\beta$. Then $\widetilde R_\alpha$ has support on $D_1$ and by \eqref{pippi2},  
$\pi_*\widetilde R_\alpha=R_\alpha$. Moreover  
\begin{equation}\label{black}
\widetilde R_\alpha \wedge \pi^*dz = 
\omega \wedge 
\dbar 
\frac{1}{\sigma_1^{a_1}\cdots \sigma_N^{a_N}},
\end{equation}
where $a_i=n\ord_{D_i}(\m^\beta) -
\ord_{D_i}(z^{\beta-\alpha})-\ord_{D_i}(dz)$ and $\omega$ is smooth.  
A direct computation gives that 
$\ord_{D_i}(dz) \geq \ord_{D_i}(z^\1)-1$. 
Since $n\ord_{D_1} (\m^\beta)=\ord_{D_1}(z^\beta)$, see Example
~\ref{morgon}, it follows that 
\[
a_1=\ord_{D_1}(z^\beta)-\ord_{D_1}(z^{\beta-\alpha})-\ord_{D_1}(dz)\leq \ord_{D_1}(z^{\alpha-\1})+1\leq
\ord_{D_1}(M), 
\]
cf.\ \eqref{vanessa}.

That $D=\sum_{i=1}^N r_i D_i$ has simple normal crossings support means that
at $x\in \widetilde X$ we can choose coordinates $s_1,\ldots, s_n$
such that for some $p$, $\pi^{-1}(0)=\{s_1\cdots s_p=0\}$ and for each
$i$ either $x\notin D_i$ or $D_i=\{s_j=0\}$ for some $j$. Thus we may
assume that at $x$, for $i=1,\ldots, p$,
$\sigma_i=s_i\sigma_i'$, where $\sigma_i'$ does not vanish at $x$, and
moreover $\sigma_{p+1},\ldots, \sigma_N$ do not vanish at
$x$. 
Since $a_1\leq r_1=\ord_{D_1}(M)$,  in view of Example
~\ref{kronan}, 
\begin{equation}\label{blanc}
\1_{D_1} \dbar \frac{1}{\sigma_1^{a_1}\cdots \sigma_N^{a_N}}=
\1_{D_1} \sigma_1^{r_1-a_1}\cdots \sigma_N^{r_N-a_N} 
\dbar \frac{1}{\sigma_1^{r_1}\cdots \sigma_N^{r_N}}.
\end{equation}
Let $\eta_\alpha$ be the semi-meromorphic
form 
$\eta_\alpha=\sigma_1^{r_1-a_1}\cdots \sigma_N^{r_N-a_N} \omega$. 
Since $\widetilde R_\alpha$ has support on $D_1$ it follows from
\eqref{black} and \eqref{blanc} that 
\[
\widetilde R_\alpha \wedge \pi^* dz 
= 
\eta_\alpha\wedge 
\dbar \frac{1}{\sigma_1^{r_1}\cdots \sigma_N^{r_N}}. 
\]

Now, let $\eta_\tau=c_\tau\eta_\alpha$ and $\eta=\sum_{\tau\in
  X_n} \eta_\tau ~e_\tau\otimes e_\emptyset^*$. 
 Then $\widetilde R_\tau$ and $\widetilde
R$ are of the desired form. 
\end{proof}

By using the description of residue currents of general, not
necessarily Artinian, monomial ideals in \cite[Section~5]{W} it
should be possible to extend Theorems ~\ref{huvud} and ~\ref{knaochta}
to this setting, although the formulations would become slightly more
complicated. However, the arguments above rely heavily on the explicit
descriptions of the log-resolution of a monomial ideal $M$ and the residue
current $R$ of a cellular resolution of $M$, 
as well as the explicit description of Bochner-Martinelli
residue currents, and it does not seem obvious how to extend them to
non-monomial ideals.

In \cite{LL} Lazarsfeld and Lee proved that multiplier ideals are very
special among integrally closed ideals by proving that the maps
$\varphi_j$ in a free resolution do not vanish to
high order in a certain sense. 
It might happen that in a similar way $R$ has small singularities, in
the sense that it is the pushforward of a current $\widetilde R$ that
satisfies \eqref{underbara}, only for a restricted class of
integrally closed ideals. 

\smallskip 

We finally remark that if the residue current $R$, associated with a
general 
ideal $\a\subset \Ok_0^n$,  
is the pushforward of a current
$\widetilde R$ of the form \eqref{bara}, then, in general, 
 the exponents $a_i$ in \eqref{jul}
have to be (at least) like $n r_i$, where $r_i$ is as in the
introduction. Indeed, assume that for some $\nu\in \mathbf N$, 
$a_i\leq \nu r_i$ for each $i$, and take $g\in
\overline \a^\nu$. Then $\pi^*g$ is divisible by $\sigma$ 
and thus $gR=0$, cf.\ the arguments after \eqref{jul}. 
It follows that 
$\overline \a^\nu\subset \a$. 
The classical Brian\c con-Skoda theorem,
\cite{SB}, asserts that this inclusion holds for $\nu=\min (n,m)$,
where $m$ is the minimum number of generators. This theorem is sharp and therefore in general
the $a_i$ need to be at least like $n r_i$, cf.\ Example
~\ref{bruno}.

\end{document}